\newbox\dottedarrow@box
\title[Gromov elliptic resolutions of quartic double solids]{Gromov elliptic 
resolutions of quartic double solids}
\author{Ciro Ciliberto and Mikhail Zaidenberg}
\address{Dipartimento di Matematica, Universit\`a di Roma ``Tor Vergata'', Via della 
Ricerca Scientifica, 00177 Roma, Italia}
\email{cilibert@axp.mat.uniroma2.it}
\address{Univ. Grenoble Alpes, CNRS, IF, 38000 Grenoble, France}
\email{mikhail.zaidenberg@univ-grenoble-alpes.fr}
\thanks{2020 \emph{Mathematics Subject Classification.} Primary 
14J30, 14J70, 14M20; Secondary 14N25, 32Q56.} 
\keywords{Gromov's ellipticity, 
uniform rationality,
cubic threefold, quartic double solid.}
\newtheorem{theorem}{Theorem}[section]
\newtheorem*{theorem*}{Theorem}
\newtheorem*{conjecture*}{Conjecture}
\newtheorem{proposition}[theorem]{Proposition}
\newtheorem*{corollary*}{Corollary}
\theoremstyle{definition}
\newtheorem*{example*}{Example}
\theoremstyle{remark}
\newtheorem*{remark*}{Remark}
\newtheorem{remark}[theorem]{Remark}
\theoremstyle{remark}
\newtheorem*{remarks*}{Remarks}
\newcommand{\ZZ}{{\mathbb Z}}
\newcommand{\PP}{{\mathbb P}}
\newcommand{\CC}{{\mathbb C}}
\newcommand{\QQ}{{\mathbb Q}}
\newcommand{\kk}{{\mathbb K}}
\def\cO{{\mathcal O}}
\newcommand{\A}{{\mathbb A}}
\DeclareMathOperator{\Pic}{Pic}
\def\leq{\leqslant}
\def\ge{\geqslant}
\def\le{\leqslant}
\begin{document}
\begin{abstract}
We construct examples of nodal quartic double solids 
that admit uniformly rational, 
and so elliptic in Gromov' sense, small algebraic resolutions.
\end{abstract}
\thanks{Acknowledgements: The first authors is a members of GNSAGA 
of the Istituto Nazionale di Alta Matematica ``F. Severi''}
\maketitle

%
\section{Introduction}
We work over an algebraically closed field $\kk$ of characteristic zero.
A smooth algebraic variety $X$ is called \emph{algebraically 
elliptic in Gromov' sense}, or simply
\emph{Gromov elliptic}, if it admits a \emph{dominating spray} $(E,p,s)$,
where $p\colon E\to X$ is an algebraic vector bundle with a zero section 
$Z$ and 
$s\colon E\to X$ is a morphism such that $s|Z=p|_Z$ and for each fiber 
$E_x=p^{-1}(x)$, $x\in X$,
the restriction $ds|_{T_{0_x}E_x}\colon T_{0_x}E_x\to T_xX$ is surjective,
where $0_x\in E_x$ is the origin of the vector space $E_x$. 
The Gromov ellipticity of a variety 
ensures some important properties of approximation and 
interpolation, see
e.g. \cite{Gro89}, \cite{For17}, \cite{For23} and \cite{Zai24}. 
Furthermore, a Gromov elliptic variety $X$ of dimension $n$ 
admits a morphism $f\colon \A^{n+1}\to X$ 
such that the restriction $f|_U\colon U\to X$ to 
some open subset $U\subset \A^{n+1}$ is smooth and 
surjective, see \cite{Kus22}. For $\kk=\CC$  
there exists a morphism $f\colon \A^{n}\to X$ 
with similar properties provided that $X$ is complete, 
see \cite[Theorem~1.1]{For17a}. 

The following projective varieties are Gromov elliptic, 
see e.g. \cite{AKZ24}, \cite{Zai24} 
and the literature therein:
\begin{itemize}
\item rational  smooth projective surfaces;
\item flag varieties $G/P$ of semisimple algebraic groups;
\item smooth complete spherical varieties, in particular, 
\item smooth complete toric varieties;
\item smooth Fano threefolds of index 2 and genus 10, etc.
\end{itemize}
All varieties in this list are uniformly rational. 
In the terminology of \cite{BB14}, 
an algebraic variety $X$ of dimension $n$ is 
called \emph{uniformly rational} if it is covered by open charts 
isomorphic to open subsets in $\A^n$.  According 
to \cite[Example~2.4]{BB14} every rational
smooth cubic hypersurface in $\PP^{n+1}$, $n\ge 2$, 
is uniformly rational, and so is Gromov elliptic
due to the following result. 
\begin{theorem}[\rm{\cite[Theorem~1.3]{AKZ24}}]
\label{thm:AKZ} 
A uniformly rational complete 
algebraic variety $X$ is Gromov elliptic. 
\end{theorem}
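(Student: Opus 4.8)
The plan is to reduce the construction of a dominating spray to the production of finitely many algebraic $\Ga$-actions on $X$ whose velocity fields span the tangent space at every point, and then to manufacture those actions chart by chart.

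First I would record the following reduction. Suppose $\Phi^1,\dots,\Phi^N\colon \A^1\times X\to X$ are algebraic $\Ga$-actions with velocity fields $\nu_1,\dots,\nu_N\in H^0(X,T_X)$, and suppose that for every $x\in X$ the vectors $\nu_1(x),\dots,\nu_N(x)$ span $T_xX$. Then the trivial bundle $E=\cO_X^{\oplus N}$ together with the morphism
\[
s\colon X\times\A^N\to X,\qquad s(x,t_1,\dots,t_N)=\Phi^1_{t_1}\circ\cdots\circ\Phi^N_{t_N}(x),
\]
is a dominating spray: one has $s(x,0)=x$, and since every inner flow is the identity at $\underline t=0$, the fibre differential sends $\partial/\partial t_k$ to $\nu_k(x)$, so $ds$ is surjective along the zero section precisely because the $\nu_k(x)$ span. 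Thus it suffices to produce enough global $\Ga$-actions. Here the completeness of $X$ is used twice: it guarantees that $\Aut^0(X)$ is an algebraic group of finite type, so that a suitable regular vector field genuinely exponentiates to an algebraic $\Ga$-action $\A^1\times X\to X$; and it provides the \emph{room} that lets the local translation flows close up to global automorphisms.

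The local input comes from uniform rationality. Fix a finite cover $X=\bigcup_{i=1}^m U_i$ with isomorphisms $\phi_i\colon U_i\xrightarrow{\ \sim\ } W_i\subseteq\A^n$ onto open subsets. In the $i$-th chart the coordinate translations give the regular fields $\phi_i^{*}(\partial/\partial x_j)$, $j=1,\dots,n$, which span the tangent space at every point of $U_i$. For a function $f$ on $U_i$ that does not involve $x_j$, the field $f\cdot\phi_i^{*}(\partial/\partial x_j)$ is a shear, integrating to the locally nilpotent flow $x\mapsto x+t f e_j$. I would take $f$ to be, in addition, the restriction of a section of a sufficiently positive line bundle on $X$ vanishing to high order along $X\setminus U_i$; then $f\cdot\phi_i^{*}(\partial/\partial x_j)$ extends by zero to a regular vector field $\bar\nu$ on all of $X$. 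Since $\bar\nu$ is a genuine shear on the dense open set $U_i$, its flow is linear in the time parameter, which forces the associated one-parameter subgroup of $\Aut^0(X)$ to be unipotent; using completeness of $X$, the field $\bar\nu$ therefore exponentiates to an algebraic $\Ga$-action on $X$.

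The main obstacle is to arrange these extended $\Ga$-actions so that their velocities jointly span $T_xX$ at every point of $X$. The difficulty is a genuine tension between three demands on the factor $f$: it must vanish to high order on $X\setminus U_i$ so that $\bar\nu$ extends regularly; it must be independent of $x_j$ so that the shear stays unipotent; and it must be nonzero at $x$ so that $\bar\nu(x)$ is a nonzero multiple of $\phi_i^{*}(\partial/\partial x_j)(x)$. A single $f$ cannot meet all three simultaneously at every point, so I would instead use, for each chart and each coordinate direction, a finite family of such shears whose factors have empty common zero locus inside $U_i$; a base-point-freeness argument then shows that at each $x\in U_i$ the resulting velocities already reconstitute the full pushed-forward coordinate frame, hence span $T_xX$. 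Collecting these $\Ga$-actions over all charts and feeding them into the reduction of the first step yields the desired dominating spray. The delicate points to be verified with care are exactly this simultaneous control of order of vanishing, unipotence, and non-degeneracy, together with the check that finitely many shears suffice.
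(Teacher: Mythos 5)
The paper offers no proof of this statement --- it is quoted from \cite[Theorem~1.3]{AKZ24} --- so your attempt has to be measured against the argument in that reference. Your first reduction is correct as a piece of mathematics: if one has algebraic $\Ga$-actions $\Phi^1,\dots,\Phi^N$ whose velocity fields span every tangent space, then $s(x,\underline t)=\Phi^1_{t_1}\circ\cdots\circ\Phi^N_{t_N}(x)$ on the trivial bundle is a dominating spray (this is the standard argument that ``flexible'' varieties are elliptic). The fatal problem is that this reduction is simply not available for the class of varieties in the theorem, and the subsequent manufacture of the $\Ga$-actions collapses at every step.

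Concretely, a uniformly rational complete variety may have \emph{finite} automorphism group and \emph{no nonzero global vector fields whatsoever}: a general smooth cubic surface in $\PP^3$, or the blow-up of $\PP^2$ at eight very general points, is a smooth rational projective surface, hence uniformly rational (see \cite{BB14}), yet satisfies $H^0(X,T_X)=\Lie\Aut(X)=0$. For such $X$ there are no nontrivial algebraic $\Ga$-actions at all, so no family of velocity fields can span anything, and any proof routed through global one-parameter groups is doomed. Your extension mechanism also fails on its own terms: a regular function $f\in H^0(X,\cO_X)$ on a complete variety is constant and cannot vanish on $X\setminus U_i$, while if $f$ is instead a section of a positive line bundle $L$ then $f\cdot\phi_i^{*}(\partial/\partial x_j)$ is a section of $T_X\otimes L$, not a vector field, and has no flow. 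Moreover, even where the field is an honest shear in the chart, its flow $x\mapsto x+tfe_j$ does not preserve the open subset $W_i\subseteq\A^n$, so one cannot read off algebraicity or unipotence of the resulting one-parameter subgroup from the chart formula. These are exactly the obstructions that force \cite{AKZ24} to work with sprays rather than group actions: for each chart one extends the local translation spray to a spray on all of $X$ over a suitable line bundle, using completeness (a rational map from a smooth variety to a complete variety is defined off a set of codimension two) to get across the locus where the translated point leaves the chart; quasi-compactness yields finitely many such sprays dominating everywhere, i.e.\ algebraic subellipticity, and one concludes by composing sprays. That extension-across-the-boundary step is the heart of the theorem and is absent from your proposal.
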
 
In fact, every
smooth cubic hypersurface in $\PP^{n+1}$, 
$n\ge 2$, is Gromov elliptic,
see \cite{KZ24}.
In particular, smooth cubic threefolds in $\PP^4$ 
provide example of irrational Gromov elliptic 
projective varieties.

It is plausible that also every smooth 
quartic double solid is 
Gromov elliptic, cf. \cite[Question~4.27]{Zai24}. 
Recall that a nodal quartic double solid 
with at most six 
nodes is irrational,
see \cite[Theorem~1.2]{CPS19}.
In the present paper
we construct examples of rational nodal 
quartic double solids   
with $6+s$ ($4\leq s\leq 10$) nodes, which admit 
 small algebraic resolutions that are  
uniformly rational. Due to Theorem \ref{thm:AKZ}
these resolutions are Gromov elliptic. 
So, our main result is the following theorem. 
\begin{theorem}\label{mthm}
For any $s=4,\ldots,10$,  and $s\neq 5$,  
there are quartic double solids  
with $6+s$ nodes  which admit algebraic 
 small resolutions that are
uniformly rational, and so they are Gromov elliptic. 
\end{theorem}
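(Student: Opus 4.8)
The plan is to realize each such quartic double solid as a birational projection of a suitable nodal cubic threefold, and to read off its small resolution from the resolved cubic. First I would recall the classical projection construction: writing a cubic threefold $Y\subset\PP^4$ with a chosen smooth point $p\in Y$ as $F=x_4^2L+x_4Q+C=0$ in coordinates with $p=[0:0:0:0:1]$ and $\PP^3=\{x_4=0\}$, where $L,Q,C$ are a linear, a quadratic and a cubic form on $\PP^3$, the projection from $p$ becomes a degree-two map $Y\dashrightarrow\PP^3$ whose branch locus is the quartic surface $B=\{Q^2-4LC=0\}$. This exhibits $Y$ as birational to the quartic double solid $V\to\PP^3$ branched along $B$.

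Next I would count the nodes of $V$. The surface $B$ is automatically singular along $\{L=Q=C=0\}$, which by B\'ezout consists of six points --- these are exactly the six lines of $Y$ through the general point $p$ --- and each is a node of $B$, hence a node of $V$; this accounts for the summand $6$. I would then take for $Y$ a nodal cubic threefold carrying exactly $s$ further nodes, each of which forces one additional node of $B$, so that $V$ acquires $6+s$ nodes in total. Since a nodal cubic threefold is rational (project from one of its nodes), $V$ is rational as well, in contrast with the irrationality of the at-most-six-nodal case recorded in the Introduction.

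To produce the small resolution I would start from a projective small resolution $\sigma\colon\hat Y\to Y$ of the $s$ nodes of $Y$, blow up the smooth point $\hat p=\sigma^{-1}(p)$, and set $\hat V=\mathrm{Bl}_{\hat p}\hat Y$. The lifted projection $\hat V\to\PP^3$ is a morphism of degree two whose Stein factorization runs through $V$, and the induced map $\hat V\to V$ contracts precisely $6+s$ pairwise disjoint rational curves --- the six strict transforms of the lines through $p$ together with the $s$ exceptional curves of $\sigma$ --- each to one node of $V$. Hence $\hat V\to V$ is an algebraic small resolution. The admissible range $4\le s\le 10$ and the exclusion $s=5$ I would justify through the known existence theory of nodal cubic threefolds: $s=10$ is the maximal (Segre) case, while for small node numbers factoriality of the cubic obstructs the simultaneous projective small resolution, and the $s=5$ configuration must be ruled out separately.

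Finally, for the uniform rationality of $\hat V$ I would cover it by charts isomorphic to open subsets of $\A^3$ of three kinds. Over the smooth locus of $Y$, where $\hat V\to V$ is an isomorphism, I would use that smooth cubic threefolds are uniformly rational (\cite[Example~2.4]{BB14}): the projection-from-a-line charts are local at smooth points and survive the removal of the finitely many nodes. Around each of the $6+s$ exceptional curves, $\hat V$ is modelled on the total space $\mathrm{Tot}(\cO_{\PP^1}(-1)^{\oplus 2})$, which is visibly covered by two copies of $\A^3$; and around the exceptional $\PP^2$ over $p$, $\hat V$ is a point blow-up of a smooth threefold, uniformly rational by \cite{BB14}. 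Gromov ellipticity then follows from Theorem~\ref{thm:AKZ}. I expect the uniform rationality step to be the main obstacle: one must make the cubic-threefold charts genuinely disjoint from the nodes and patch them compatibly with the local small-resolution models, and one must secure the existence of the projective small resolutions underlying the node count, which is exactly what constrains the admissible values of $s$.
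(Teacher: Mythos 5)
Your overall strategy coincides with the paper's: both use the classical correspondence (as in Proposition~\ref{prop:Kre-Had}, after Kreussler and Hadan) between a nodal cubic threefold with a marked general smooth point and a quartic double solid whose branch quartic has six nodes on a trope; both draw the admissible values of $s$ from the Finkelnberg--Werner classification of nodal cubic threefolds admitting projective small resolutions; and both obtain the small resolution of the double solid by resolving the cubic, blowing up the marked point, and contracting. The place where your argument genuinely diverges --- and where it breaks --- is the uniform rationality step, which you correctly identify as the main obstacle but then attempt to settle by local patching.

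Uniform rationality is a Zariski-local condition: one needs Zariski-open charts isomorphic to Zariski-open subsets of $\A^3$. Your chart around an exceptional curve of the small resolution invokes the local model $\mathrm{Tot}(\cO_{\PP^1}(-1)^{\oplus 2})$, but this is only an analytic (or formal) model of a resolved ordinary double point; it provides no Zariski-open neighbourhood of the exceptional $\PP^1$ isomorphic to an open subset of that total space, hence no admissible chart. This is exactly why the paper does not argue locally but quotes the global result \cite[Thm.~3.10]{BB14} (Theorem~\ref{thm:unif-rat}) that \emph{every} small algebraic resolution of a nodal cubic threefold is uniformly rational; that proof uses the projective geometry of the cubic, not the local structure of the nodes. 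A secondary inaccuracy: \cite[Example~2.4]{BB14} concerns \emph{rational} smooth cubic hypersurfaces, and smooth cubic threefolds are irrational, so it does not give you charts "over the smooth locus"; what you need is uniform rationality near smooth points of the \emph{nodal} (hence rational) cubic, which is again part of what Theorem~\ref{thm:unif-rat} delivers. Once you replace your local patching by that theorem, combined with Theorem~\ref{thm:proj-res} (to pass from projective to algebraic small resolutions) and Theorem~\ref{thm:blow} (for the point blow-up), and make the list of admissible pairs $(d,s)$ explicit as in Theorem~\ref{thm:FW}, your construction becomes the paper's proof.
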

\section{Cubic threefolds and quartic double solids 
with branch surface
having a trope}\label{sec2}
Let $X \subset \PP^4$ be an irreducible, 
reduced cubic hypersurface with 
$s$ ordinary double points or nodes, 
i.e., singularities of type $A_1$.  
The projection from a node shows that $X$ is 
rational provided $s\ge 1$.
It is well known  that $s \le 10$ 
(see, e.g., \cite{Seg1886}, \cite{Kal86}). 
In what follows we concentrate on the case of 
a rational singular $X$, and so $s\ge 1$.

Let $x\in X$ be a general point and $p\colon Y \to X$ 
be the blow-up of $ X$ at $x$
with exceptional divisor $E \cong \PP^2$. 
The projection of $X$ from $x$ to $\PP^3$ 
determines a surjective morphism $\pi\colon Y \to \PP^3$ 
that is a double cover branched along a  quartic surface 
$B\subset \PP^3$, 
see \cite[Sec. 3]{Kre00}. 
The image of $E$ via $\pi$ is a plane in $\PP^3$ touching 
$B$ along a conic (that we will soon see to be irreducible), 
which passes through exactly six distinct nodes of $B$. 
In the terminology of \cite{Hen11} and \cite{Jes16}, 
such a plane is called a \emph{trope} of $B$.

More in detail, we can choose affine coordinates 
$(x_1,\ldots,x_4)$ in $\PP^4$ such that $x$ 
is the origin and $X$ 
is defined by an affine equation of the form
$x_1 + 2g_2 + g_3 = 0$
where $g_k(x_1, x_2, x_3, x_4) \in \kk[x_1, x_2, x_3, x_4]$ 
is homogeneous of degree $k \in \{2, 3\}$. 
The affine equation of the tangent hyperplane $T_{X,x}$ 
 is $x_1 = 0$. 
The surface $B$ is defined in $\PP^3$ 
with homogeneous coordinates 
$[x_1,x_2,x_3,x_4]$ by the homogeneous equation 
$g^2_2 - x_1g_3 = 0$.
The intersection of $X$ 
with the tangent hyperplane 
$x_1 = 0$ to $X$ at $x$ is the cubic surface $F$ in 
$\PP^3$ with equation
$2g_2(0, x_2, x_3, x_4) + g_3(0, x_2, x_3, x_4) = 0$.
It is smooth off $x$
and has at $x$ an ordinary double point 
(cf. \cite[Thm.~1.4]{CC01}). 
This implies that $g_2(0, x_2, x_3, x_4)$ is 
a quadratic form of rank $3$, 
and so the equation 
$g_2(0, x_2, x_3, x_4)=0$ defines 
a smooth conic $C$ in $\PP^2$. 
There are exactly six distinct lines 
on $F$ passing through $x$. 
In the hyperplane $x_1 = 0$ these lines are defined 
by the system 
 $g_2(0, x_2, x_3, x_4) = g_3(0, x_2, x_3, x_4) = 0$.
 
 The proper transforms on $Y$ of 
 the six lines on $X$ through $x$ 
 are contracted by $\pi$ to the six points in $\PP^3$, 
 with homogeneous coordinates $[x_1, x_2, x_3, x_4]$, 
 defined by the system $x_1 = g_2 = g_3 = 0$. 
 These points are nodes of the surface $B$ 
lying on the conic $C$. 
 The plane of $\PP^3$ 
 with equation $x_1 = 0$ is the trope 
 containing these nodes. 
 The trope  is tangent to $B$ along $C$.
Finally, $B$ has further $s$ distinct nodes that are 
the images of the original $s$ nodes of $X$; 
for all this, see again \cite[Sec. 3]{Kre00}.

One has the following:

\begin{proposition}[\rm{see \cite[Thm.~3.1]{Kre00} or 
\cite[Sec.~3.2]{Had00}}] 
\label{prop:Kre-Had} Giving a double cover
$\phi\colon Z\to \PP^3$ 
 branched along  a quartic surface $B$, 
 with  exactly $6 + s$ nodes 
corresponding to the nodes of $B$, six of 
which lie on a trope, is equivalent to  
giving the following data:
\begin{itemize}
\item a cubic hypersurface $X$ in  $\PP^4$ with $s$ nodes;
\item a smooth point $x\in X$ with six distinct lines 
on $X$ passing through $x$, lying on 
a quadric cone of rank 3, 
with the blow-up $\sigma\colon Y \to X$ of $X$ at $x$;
\item a morphism $\varphi\colon Y \to Z$ 
such that the morphism 
$\tilde\pi\colon Y \to \PP^3$ determined by the projection 
$\pi$ from $x$ factors through $\varphi$ and $\phi$, 
 where
\item $\varphi\colon Y \to Z$ is a partial small resolution 
of $Z$ at the six nodes 
of $Z$ lying on the trope.
\end{itemize}
\end{proposition}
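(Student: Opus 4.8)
The plan is to establish the stated equivalence by exhibiting two mutually inverse constructions, the forward one being essentially the discussion preceding the statement. Given the cubic-threefold data, let $\sigma\colon Y\to X$ be the blow-up of $X$ at the smooth point $x$. Since blowing up a smooth point does not affect the $s$ nodes of $X$, the threefold $Y$ has exactly $s$ nodes. The projection $\pi$ from $x$ induces $\tilde\pi\colon Y\to\PP^3$, generically $2:1$, whose branch locus is, by the computation recalled above, the quartic $B\colon g_2^2-x_1g_3=0$, with trope $\{x_1=0\}$ tangent to $B$ along the smooth conic $C\colon g_2(0,x_2,x_3,x_4)=0$. Taking $\phi\colon Z\to\PP^3$ to be the normal double cover branched along $B$, a threefold with $6+s$ nodes, the universal property of $\phi$ factors $\tilde\pi$ through $\phi$, producing $\varphi\colon Y\to Z$ with $\tilde\pi=\phi\circ\varphi$. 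It then remains to see that $\varphi$ is an isomorphism outside the six nodes of $Z$ lying over the nodes of $B$ on $C$, contracting a curve to each of them, i.e. that $\varphi$ is a partial small resolution of the six trope-nodes.

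For the converse I would reverse this dictionary. Starting from $\phi\colon Z\to\PP^3$ branched along $B$, choose coordinates $[x_1:\dots:x_4]$ so that the trope is $\{x_1=0\}$; tangency of the trope to $B$ along the conic $C$ means $B\cap\{x_1=0\}=2C$, which lets one write the equation of $B$ as $g_2^2-x_1g_3$, where $C\colon g_2=0$ and $g_3$ is a cubic form. I would then define $X\subset\PP^4$ by $x_0^2x_1+2x_0g_2+g_3=0$, read as the quadratic $x_1x_0^2+2g_2x_0+g_3=0$ in the new variable $x_0$. Its discriminant equals $4(g_2^2-x_1g_3)=4B$, so the projection $[x_0:\dots:x_4]\mapsto[x_1:\dots:x_4]$ is the double cover branched along $B$; the point $x=[1:0:\dots:0]$ is a smooth point of $X$ with tangent hyperplane $\{x_1=0\}$, the lines on $X$ through $x$ are cut out by $x_1=g_2(0,x_2,x_3,x_4)=g_3(0,x_2,x_3,x_4)=0$ — six of them, lying on the rank-$3$ quadric cone $\{g_2(0,x_2,x_3,x_4)=0\}$ — and the $s$ nodes of $B$ off the trope pull back to $s$ nodes of $X$. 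Finally I would check that the two constructions are mutually inverse: the $X$ built from $B$ reprojects to $B$, and the $B$ built from $X$ has $X$ as its associated cubic, which yields the claimed bijection of data.

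The main obstacle I expect is the local birational analysis over the singular points. One has to verify that $\varphi\colon Y\to Z$ is an isomorphism over each of the $s$ off-trope nodes — so that these genuinely match the nodes of $X$ — while it contracts a single $\PP^1$ over each of the six nodes of $B$ on $C$, i.e. that $\varphi$ is \emph{small} precisely there. This rests on the local model of the double cover at a node of $B$ together with the fact that the exceptional divisor $E\cong\PP^2$ maps isomorphically onto the trope and meets the ramification exactly along $C$, so that it separates the two sheets over $C$ and resolves the corresponding nodes; for these computations I would lean on \cite[Thm.~3.1]{Kre00} and \cite[Sec.~3.2]{Had00}. A secondary point to pin down is that the smoothness (rank $3$) of $C$ corresponds, on the cubic side, to the six lines through $x$ being distinct and spanning the required cone, so that the two collections of data match bijectively without loss.
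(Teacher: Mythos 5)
Your proposal is correct and follows the same route as the paper: the forward construction (blow-up at $x$, projection from $x$, branch quartic $g_2^2-x_1g_3=0$, trope $\{x_1=0\}$, six lines cut out by $x_1=g_2=g_3=0$ on the rank-$3$ cone) is exactly the discussion the paper gives in Section \ref{sec2} before the statement, and your converse, reading the cubic as the quadratic $x_1x_0^2+2g_2x_0+g_3$ in $x_0$ with discriminant $4(g_2^2-x_1g_3)$, is the standard inverse dictionary from the cited sources. Note that the paper offers no proof of the proposition beyond that forward discussion --- it attributes the equivalence to \cite[Thm.~3.1]{Kre00} and \cite[Sec.~3.2]{Had00} --- and the local analysis you rightly flag as the main remaining work (that $\varphi$ is an isomorphism over the $s$ off-trope nodes and contracts a single $\PP^1$ over each of the six trope nodes, i.e.\ is a partial small resolution exactly there) is likewise deferred to those references, so your outline matches the paper's treatment in both substance and level of detail.
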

{ \begin{remark}
The varieties and morphisms from Proposition 
\ref{prop:Kre-Had} fit in the following commutative diagram:

\usetikzlibrary{matrix,arrows,decorations.pathmorphing}
     \begin{center}
        \begin{tikzpicture}[scale=2]
        
        \node at (0,1){$Y$};
        \node at (0,0){$Z$};
        \node at (1,1){$X$};
        \node at (2,1){$\PP^4$};
        \node at (1,0){$\PP^3$};
        \node at (0.5,0.2){$\phi$};
        \node at (1.5,1){$\hookrightarrow$};
        \draw[->][thick] (0,0.8)--node[left=1pt]{$\varphi$} (0,0.2); 
        \draw[->][thick] (0.2,1)--node[above=1pt]{$\sigma$} (0.8,1); 
        \draw[->][thick] (0.2,0.8)--node[above=1pt]{$\tilde\pi$} (0.8,0.2);
         \draw[->][thick] (0.2,0)--node[above=1pt]{$\phi$} (0.8,0); 
         \draw[->][thick,dashed] (1,0.8)--node[right=1pt]{$\pi$} (1,0.2); 
         \draw[->][thick,dashed] (1.8,0.8)--node[right=1pt]{$\pi$} (1.2,0.2);

 \end{tikzpicture}
\end{center}
\end{remark}
}
 \section{Proof of Theorem \ref{mthm}}
 We recall first some results from 
\cite{BB14} used in the proof.
\begin{theorem}[\rm{cf. \cite[Prop. 2.6]{BB14}}] \label{thm:blow}
Let $X$ be a uniformly rational variety and let $Z \subset X$ 
be a smooth subvariety. 
Then the blow-up of $X$ along $Z$ is uniformly rational.
\end{theorem}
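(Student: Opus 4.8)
The plan is to verify uniform rationality of $\widetilde X:=\mathrm{Bl}_Z X$ directly from the definition, exploiting that the property is local on $\widetilde X$: it suffices to cover $\widetilde X$ by Zariski-open charts isomorphic to open subsets of $\A^n$, where $n=\dim X$. Writing $E$ for the exceptional divisor and $\sigma\colon\widetilde X\to X$ for the blow-down, the open set $\widetilde X\setminus E$ is isomorphic to $X\setminus Z$ via $\sigma$, hence is covered by (the traces of) the charts witnessing uniform rationality of $X$. Thus the only work is to produce a chart around each point $P\in E$ lying over a point $z\in Z$.

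First I would pass to a local model. Choosing, via uniform rationality of $X$, a chart $z\in U\cong V\subseteq\A^n$, the intersection $W:=Z\cap U$ is smooth in $V$, and after shrinking it is a complete intersection $W=\{f_1=\dots=f_c=0\}$ with $c=\codim_X Z$ and $df_1,\dots,df_c$ linearly independent along $W$. The blow-up is then $\{f_it_j=f_jt_i\}\subseteq V\times\PP^{c-1}$, covered by the affine pieces $\mathcal U_k=\{f_j=s_jf_k:\ j\neq k\}$ with $s_j=t_j/t_k$, and it suffices to show each $\mathcal U_k$ is uniformly rational. Since $f=(f_1,\dots,f_c)\colon V\to\A^c$ is smooth (hence flat) along $W=f^{-1}(0)$, blow-up commutes with base change, so $\mathrm{Bl}_W V\cong V\times_{\A^c}\mathrm{Bl}_0\A^c$ and $\mathcal U_k$ is the pullback of the $k$-th affine chart of $\mathrm{Bl}_0\A^c$.

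To build the chart at $P$ I would adapt coordinates to the normal direction determined by $P$: complete $f_1,\dots,f_c$ to a local coordinate system by regular functions $h_1,\dots,h_{\dim Z}$ so that $(h,f)\colon V\to\A^n$ is étale at $z$, whence $(h_1,\dots,h_{\dim Z},f_1,s_2,\dots,s_c)$ is étale at $P$ on $\mathcal U_1$, presenting a neighbourhood of $P$ as étale over the model $\A^{\dim Z}\times\mathrm{Bl}_0\A^c$. This model is manifestly uniformly rational, since $\mathrm{Bl}_0\A^c$ is covered by charts isomorphic to $\A^c$ and hence the product is covered by charts isomorphic to $\A^n$. When $W$ is locally a polynomial graph over its tangent space — in particular when $Z$ is rational, which is the only case needed for Theorem \ref{mthm}, where the centers are points, lines and conics — a shear automorphism of $\A^n$ carries $W$ to a coordinate subspace, the étale map becomes an open immersion, and one lands directly in this model.

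The hard part is the general charting near $E$ over a \emph{non-rational} part of $Z$. There no chart can send $E$ to a coordinate hyperplane, for that would exhibit a Zariski-open subset of $Z$ as an open subset of $\A^{\dim Z}$ and force it to be rational; instead $E$ must be carried to a genuinely non-coordinate hypersurface. Accordingly the étale map above cannot be upgraded to an open immersion by straightening $W$, and the crux is to choose the defining functions and the auxiliary coordinates delicately enough that the resulting étale morphism $\mathcal U_1\to\A^n$ becomes injective — hence, in characteristic zero, an open immersion — on a suitable Zariski neighbourhood of $P$. This injectivity step is the technical heart of \cite[Prop.~2.6]{BB14}.
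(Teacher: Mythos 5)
The paper itself offers no proof of this statement---it is imported verbatim from \cite[Prop.~2.6]{BB14}---so there is no internal argument to compare with; the comparison is really with the proof in that reference. The correct parts of your proposal are: the reduction to charts near the exceptional divisor $E$, the passage to the local model $\mathrm{Bl}_W V$ with $V\subseteq\A^n$ open and $W$ a smooth complete intersection, the identification $\mathrm{Bl}_W V\cong V\times_{\A^c}\mathrm{Bl}_0\A^c$ near $W$, and the straightening argument when $W$ can be carried to a coordinate subspace by an automorphism of a Zariski neighbourhood. Since the only use of this theorem in the present paper is with $Z=\{x\}$ a single smooth point (not lines or conics, as you suggest), that special case is all that Theorem~\ref{mthm} actually requires, and for it your argument is complete.

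As a proof of the theorem as stated, however, there is a genuine gap, and the route you sketch for closing it would fail. You correctly observe that for non-rational $Z$ no chart at $P\in E$ can carry $E$ into a coordinate hyperplane: e.g.\ if $Z$ is a smooth non-rational curve, $E=\PP(N_{Z/X})$ is a projective bundle over a non-rational curve, hence non-rational, so no open subset of $E$ is an open subset of $\A^{n-1}$. But the morphism you then propose to render injective, $(h_1,\dots,h_{\dim Z},f_1,s_2,\dots,s_c)$ on $\mathcal U_1$ --- and any variant having a defining function of $E$ among its components --- sends $E\cap\mathcal U_1=\{f_1=0\}$ into precisely such a coordinate hyperplane. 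Hence it can never restrict to an open immersion on any Zariski neighbourhood of $P$, no matter how delicately the $f_i$ and $h_j$ are chosen; your own observation rules out the strategy you then call the crux. (Relatedly, an étale morphism cannot in general be made injective by Zariski shrinking: $t\mapsto t^2$ on $\A^1\setminus\{0\}$ is injective on no nonempty Zariski open set, and the same phenomenon occurs for the adapted coordinates along a non-rational $W$.) A secondary, smaller gap: rationality of $Z$ does not by itself yield the ``polynomial graph over the tangent space'' property in a fixed ambient chart (consider $\{xy=1\}\subset\A^2$ at the point $(1,1)$, where the projection to the tangent direction is $2$-to-$1$); one needs a genuine straightening statement, which is again trivial only for points. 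So the general case requires a different mechanism than the one you describe, and must be taken from \cite[Prop.~2.6]{BB14}; what you have proved is the special case of a straightenable centre, which suffices for this paper's application.
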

\begin{theorem}[\rm{cf. \cite[Thm.~3.10]{BB14}}] 
\label{thm:unif-rat}
All small algebraic resolutions of nodal cubic 
threefolds $X$ in $\PP^4$
 are uniformly rational.
\end{theorem}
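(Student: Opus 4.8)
The plan is to exploit the rationality of $X$ by projection from a node, as in Section \ref{sec2}, together with Theorem \ref{thm:blow}. Two preliminary observations organize the argument. First, uniform rationality is a local property, in the sense that an open subset of a uniformly rational variety is again uniformly rational; hence it suffices to cover a given small algebraic resolution $\nu\colon \tilde X\to X$ by uniformly rational open subsets. Second, the existence of an algebraic, hence projective, small resolution forces $X$ to be non-$\QQ$-factorial (a small projective contraction of a $\QQ$-factorial variety is an isomorphism), so that $X$ has at least two nodes; write $p_1,\ldots,p_s$ with $s\ge 2$ for the nodes of $X$ and let $C_i=\nu^{-1}(p_i)\cong\PP^1$ be the exceptional curves. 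Since $\nu$ is an isomorphism over the smooth locus $X\setminus\{p_1,\ldots,p_s\}$, it remains to produce uniformly rational neighbourhoods of (a) the points of $X\setminus\{p_1,\ldots,p_s\}$ and (b) the curves $C_i$.

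For (a) I would fix a node $p$ and use the classical projection from $p$, which identifies $\mathrm{Bl}_pX$ with $\mathrm{Bl}_\Gamma\PP^3$, where $\Gamma\subset\PP^3$ is the sextic curve cut out on the rank-four quadric tangent cone at $p$ by the cubic part of the local equation of $X$; this $\Gamma$ is smooth away from the finitely many points $q_i$ ($i\ge 2$) corresponding to the remaining nodes of $X$, at which it acquires a node. Blowing up first these points and then the strict transform $\hat\Gamma$, which is now smooth, produces by two applications of Theorem \ref{thm:blow} starting from $\PP^3$ a uniformly rational variety $V$ carrying a morphism $V\to \mathrm{Bl}_pX\to X$ that is an isomorphism over $X\setminus\{p_1,\ldots,p_s\}$, since the modifications only affect the fibres over the nodes. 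Consequently $X\setminus\{p_1,\ldots,p_s\}$, viewed as an open subset of $\tilde X$, is isomorphic to an open subset of $V$, and is therefore uniformly rational. This settles (a) with a single projection.

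For (b) the curve $C_i$ lies over $p_i$ and is not reached by the previous step, so I would instead project from some other node $p\ne p_i$, which is available since $s\ge 2$. In the model $\mathrm{Bl}_pX\cong\mathrm{Bl}_\Gamma\PP^3$ the node $p_i$ appears as the node $q_i$ of $\Gamma$, whose two branches correspond to the two small resolutions of the local ordinary double point, that is, to the two rulings of its exceptional quadric. The idea is to realize the \emph{prescribed} small resolution at $p_i$ by blowing up a suitable smooth curve through $q_i$ adapted to the relevant branch of $\Gamma$, followed by the strict transform of $\Gamma$; as both centres are smooth and lie in a blow-up of $\PP^3$, Theorem \ref{thm:blow} then yields a uniformly rational neighbourhood of $C_i$, isomorphic to the corresponding neighbourhood in $\tilde X$.

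The main obstacle is exactly this matching in (b): one must present the \emph{given} small resolution $\tilde X$, which selects a definite branch, hence a definite ruling, at each node, as an open subset of a variety obtained from $\PP^3$ by blow-ups along \emph{smooth} centres only, with no blow-downs or flops, since these last operations need not preserve uniform rationality. Carrying this out demands a careful local analysis at each node $q_i$ of $\Gamma$, verifying that the chosen smooth blow-up separates the two branches in the way dictated by $\nu$ and that the resulting smooth threefold genuinely contains a neighbourhood of $C_i$. Once this local compatibility is checked, the charts from (a) and (b) cover $\tilde X$ by uniformly rational opens, and the theorem follows.
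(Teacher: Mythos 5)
First, note that the paper does not actually prove this statement: it is imported verbatim from \cite[Thm.~3.10]{BB14}, so there is no in-paper argument to compare yours against. Judged on its own merits, your strategy (locality of uniform rationality, projection from a node, repeated use of Theorem \ref{thm:blow}) is the right general framework and is in the spirit of Bogomolov--B\"ohning's proof, but as written it has two genuine gaps. In part (a), the claim that $V\to X$ is an isomorphism over $X\setminus\{p_1,\ldots,p_s\}$ ``since the modifications only affect the fibres over the nodes'' is false. The surface $C_p\subset X$ swept out by the lines through $p$ contained in $X$ is the cone over $\Gamma$, and it is non-normal along each line $\ell_i=\overline{p\,p_i}$ corresponding to a node $q_i$ of $\Gamma$; the exceptional divisor of the blow-up along the (now smooth) strict transform $\hat\Gamma$ maps onto $C_p$ two-to-one over $\ell_i\setminus\{p,p_i\}$, so $V\to X$ fails to be a local isomorphism along the entire lines joining $p$ to the other nodes, not just at the nodes themselves. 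Hence smooth points of $X$ lying on such lines (and, after varying the centre of projection, possibly a residual finite set lying on lines through every pair of nodes) are not covered by your charts, and (a) is not ``settled with a single projection.''

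Second, and more seriously, part (b) is the actual content of the theorem and you do not carry it out: you correctly identify that one must exhibit the \emph{given} small resolution near each exceptional curve $C_i$ as an open piece of an iterated blow-up of $\PP^3$ along smooth centres chosen so as to select the prescribed ruling of the exceptional quadric, but you then defer exactly this verification (``once this local compatibility is checked\ldots''). Without a concrete choice of smooth centre through $q_i$ and a proof that the resulting model is isomorphic to $\tilde X$ in a neighbourhood of $C_i$ (rather than to the opposite small resolution, or to something related to it by a flop, which need not preserve the neighbourhood), the argument does not close. A minor additional slip: ``algebraic, hence projective'' reverses the actual implication --- the paper's own footnote to Theorem \ref{thm:proj-res} stresses that algebraic small resolutions need not be projective --- although the conclusion you draw from it (that $X$ is not $\QQ$-factorial, so $s\ge 2$) is correct.
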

For the next theorem see \cite[Ch.~III]{Wer87},
\cite[Lem.~1.2 and Sec.~3]{FW89} and  \cite[Lem.~3.4]{BB14}.
\begin{theorem}
 \label{thm:proj-res}
Let $X$ be a nodal cubic threefold in $\PP^4$. 
There exists a projective small resolution of $X$ 
if and only if there exists an 
algebraic small resolution of $X$. 
Furthermore, if there exists a projective small 
resolution of $X$, then 
all small resolutions of $X$ are algebraic\footnote{
However, these are in general only complete 
algebraic varieties which 
need not be projective.}.
\end{theorem}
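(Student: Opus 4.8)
The plan is to realize every small resolution of $X$ inside the big resolution and to reduce the existence of a projective, respectively of a merely algebraic, small resolution to a single intrinsic property of $X$: the failure of $\QQ$-factoriality (positivity of the defect). First I would take the big resolution $\sigma\colon\tilde X\to X$ obtained by blowing up all $s$ nodes; it is smooth and projective, and over the $i$-th node its exceptional divisor is a smooth quadric $Q_i\cong\PP^1\times\PP^1$. Contracting, for each $i$, one of the two rulings of $Q_i$ produces a smooth complete algebraic space $\hat X_\epsilon$ ($\epsilon\in\{+,-\}^s$), a small resolution of $X$ whose exceptional locus consists of $(-1,-1)$-curves $\ell_1,\dots,\ell_s\cong\PP^1$; any two of these resolutions are joined by a chain of flops of the $\ell_i$. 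The implication ``projective $\Rightarrow$ algebraic'' is then trivial, a projective variety being in particular a complete algebraic variety.

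Next I would establish
\[
  X \text{ admits a projective small resolution} \iff X \text{ is not } \QQ\text{-factorial}.
\]
For ``$\Rightarrow$'', an ample divisor $H$ on a projective small resolution $\pi\colon\hat X\to X$ satisfies $H\cdot\ell_i>0$, whereas its direct image $\pi_\ast H$, were it $\QQ$-Cartier, would pull back (the morphism being small, so without divisorial correction) to a class meeting each $\ell_i$ in degree $0$; hence $\pi_\ast H$ is a Weil divisor that is not $\QQ$-Cartier. For ``$\Leftarrow$'', a non-$\QQ$-Cartier Weil divisor $D$ gives, via the relative $\Proj$ of the finitely generated symbolic algebra $\bigoplus_{m\ge 0}\cO_X(mD)$, a nontrivial projective small modification of $X$; as the nodes are the only loci that can be modified, iterating yields a projective small resolution. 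Together with the trivial passage ``projective $\Rightarrow$ algebraic'', this reduces the whole statement to the single implication: if some $\hat X_\epsilon$ is algebraic, then $X$ is not $\QQ$-factorial.

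This last implication is the main obstacle. Projectivity readily produces a divisor positive on some $\ell_i$, but mere schemeness does not, so one cannot simply push forward an ample class: on an algebraic small resolution $\hat X$ one only knows $\Pic(\hat X)=\operatorname{Cl}(\hat X)\cong\operatorname{Cl}(X)$, and if $X$ were $\QQ$-factorial every line bundle of $\hat X$ would be a pullback, hence numerically trivial on all the $\ell_i$. To rule this out I would invoke Koll\'ar's analysis of threefold flops and of the algebraicity of small modifications, which characterizes when a given $\hat X_\epsilon$ is a scheme through the numerical position of the classes $[\ell_i]$ in the movable cone of $\tilde X$, and shows that a scheme chamber is nonempty precisely when the projective chamber is, i.e. precisely when the defect is positive. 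Finally, the ``more precisely'' clause follows from the same circle of ideas: once one small resolution is projective, each $\hat X_\epsilon$ is reached from it by elementary flops of single $(-1,-1)$-curves, and by Koll\'ar's theory every such flop stays within the category of complete algebraic varieties (though projectivity may be lost, as in Hironaka's classical example), so all the $\hat X_\epsilon$ are algebraic.
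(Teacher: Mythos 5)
The paper does not actually prove this statement: it is imported verbatim (``cf.'') from \cite[Lem.~3.4]{BB14}, whose proof rests on Werner's thesis \cite{Wer87}. So your proposal has to be judged on its own, and it has a genuine gap at its central pivot. The claimed equivalence ``$X$ admits a projective small resolution $\iff$ $X$ is not $\QQ$-factorial'' is false, even for nodal cubic threefolds. Failure of $\QQ$-factoriality is witnessed at a \emph{single} node, whereas a small resolution must modify \emph{every} node; the correct condition is that each node individually is non-factorial (some global Weil divisor fails to be $\QQ$-Cartier \emph{at that node}), and your relative $\Proj$ of $\bigoplus_m\cO_X(mD)$ only produces a partial small modification, leaving any factorial nodes untouched. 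Concretely, a cubic threefold containing a plane (four nodes on the plane) and degenerated to acquire a fifth node off the plane has $(d,s)=(1,5)$: it is not $\QQ$-factorial, yet admits no projective small resolution --- which is exactly why $(1,5)$ is absent from Theorem~\ref{thm:FW} and why $s=5$ is excluded in Theorem~\ref{mthm}. Hence the chain ``algebraic $\Rightarrow$ not $\QQ$-factorial $\Rightarrow$ projective'' breaks at its second arrow.

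You have also inverted where the difficulty lies. The step you call the main obstacle --- extracting from a mere scheme structure a divisor positive on an exceptional curve $\ell_i$ --- is the elementary one: pick $p\in\ell_i$, an affine open $U\ni p$, a hypersurface in $U$ through $p$ not containing $\ell_i\cap U$, and take its closure $D$; smoothness of the resolution makes $D$ Cartier, and effectivity with $\ell_i\not\subset D$ gives $D\cdot\ell_i>0$, whence $\pi_*D$ is non-$\QQ$-Cartier at the corresponding node. This shows ``algebraic $\Rightarrow$ every node non-factorial $\Rightarrow$ projective'' (the last arrow by the $\QQ$-factorialization, which now does contract no node to an isomorphism). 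The genuinely hard ingredient, which your appeal to a ``scheme chamber'' of the movable cone and to flops preserving schemeness does not supply, is the converse criterion of Werner: a small resolution is a scheme as soon as for \emph{each} $i$ there is \emph{some} divisor $D_i$ with $D_i\cdot\ell_i>0$ (no simultaneous positivity required). Granting that, the ``more precisely'' clause is immediate --- if $D$ is ample on one projective small resolution, then on any other one, obtained by flopping a subset $S$ of the curves, take $D_i=D$ for $i\notin S$ and $D_i=-D$ for $i\in S$ under the canonical identification of Picard groups. Without that criterion, the assertion that every flop of a complete non-projective scheme is again a scheme is unsubstantiated.
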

 One can find in \cite{FW89} and \cite{Wer87} 
{ a complete} classification 
of nodal cubic threefolds $X$ in $\PP^4$ 
that admit projective small resolutions. 
This classification uses a basic invariant 
$d=d(X):={\rm rk} \,{\rm Cl}(X)- {\rm rk}\, {\Pic}(X)$
called the \emph{defect} of $X$.
It can also be defined as the non-negative integer $d$
such that the vector space 
of linear forms that vanish on the $s$ nodes of $X$ 
has dimension $5+ d - s\ge 0$ 
(see \cite[Thm.~4.2]{Wer87}). We have $s\ge d$, 
see  \cite[Sec.~1]{FW89}.
It turns out that if $X$
admits a projective (hence algebraic) small
resolution then it must have defect $d \ge 1$. 
 Moreover, using the classification 
 results in  \cite[Sec.~3]{FW89} 
and \cite[Ch.~VIII, Thm.~8.1 
and Lem.~8.2]{Wer87} one can 
immediately deduce the following: 
\begin{theorem}\label{thm:FW}
If a nodal cubic threefold $X$ in $\PP^4$ admits 
a projective small resolution, then
the pair $(d, s)$ takes one of the values
$(1, 4), (1, 6), (2, 6), (2, 7), (3, 8), (4, 9), (5, 10)$. 
Furthermore, for every pair $(d, s)$ 
from this list there exists a 
nodal cubic threefold $X$ in $\PP^4$ 
with precisely $s$ nodes and with defect $d$ 
that admits a projective small resolution. 
\end{theorem}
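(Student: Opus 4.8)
The plan is to reduce Theorem~\ref{thm:FW} to the two cited classification sources, so the main work is \emph{extraction and translation} rather than independent verification. First I would fix the defect invariant $d=d(X)$ precisely as in the excerpt, via the dimension $5+d-s$ of the space of linear forms vanishing on the $s$ nodes, and record the two constraints already available to us: $s\le 10$ (the maximal number of nodes on a cubic threefold) and $s\ge d$ (from \cite[Sec.~1]{FW89}). I would also invoke the fact, stated just before the theorem, that a cubic threefold admitting a projective small resolution necessarily has defect $d\ge 1$; this is what rules out $d=0$ and makes the list finite. The logical skeleton of the proof thus has two halves: \emph{necessity} (if a projective small resolution exists, then $(d,s)$ lies in the given list) and \emph{existence} (each listed pair is actually realized).

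For the necessity direction I would work through the structure theorems in \cite[Ch.~VIII, Theorem~8.1 and Lemma~8.2]{Wer87}. The strategy is to show that the existence of a projective small resolution forces the nodes to impose a very rigid linear-algebraic condition — equivalently, that the defect and the node number are tightly coupled. Concretely, I expect Werner's analysis to give, for each admissible defect $d$, a short interval (often a single value) of possible $s$, which one then lists. The delicate point is bookkeeping: one must be careful that the invariant used by Werner and by Finkelnberg--Werner matches the normalization $\dim = 5+d-s$ used here, since sign and offset conventions for ``defect'' vary across the literature. I would verify this matching on one or two small cases (say $(1,4)$ and $(2,7)$) by hand before trusting the translation wholesale.

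For the existence direction I would appeal directly to the explicit constructions in \cite[Sec.~3]{FW89}, where nodal cubic threefolds of each prescribed $(d,s)$ are written down together with a projective small resolution. Here the task is simply to cite the relevant families and confirm that the node count and defect are as claimed; the word ``immediately'' in the statement signals that the authors intend the reader to read off the list from the tables in those references rather than reconstruct the geometry. The main obstacle, then, is not a deep argument but the reconciliation of conventions and the assurance that the combined constraints $d\ge 1$, $s\ge d$, $s\le 10$ together with Werner's relations genuinely eliminate every pair outside the stated seven — in particular confirming that borderline cases such as $(1,5)$ or $(3,7)$ do \emph{not} admit projective small resolutions, so that the list is exhaustive and not merely sufficient. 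I would close by remarking that, via Theorem~\ref{thm:proj-res}, each such $X$ then admits algebraic small resolutions, which is the form in which the result feeds into the proof of Theorem~\ref{mthm}.
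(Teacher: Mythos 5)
Your proposal matches the paper's treatment: the theorem is presented there as an immediate consequence of the classification results in \cite[Sec.~3]{FW89} and \cite[Ch.~VIII, Theorem~8.1 and Lemma~8.2]{Wer87}, with no further argument, and your plan is precisely to extract both the necessity and existence halves from those same sources while checking that the defect conventions agree. The extra care you propose about normalization and about confirming exhaustiveness of the list is sensible but does not change the route.
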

 \begin{remark}  
 These results are 
 established in the cited papers 
 in the case where $\kk=\CC$ 
 is the complex number field.
 It is easily seen that they still hold over 
 any algebraically closed field $\kk$ 
 of characteristic zero.
 Indeed, the equation of the corresponding 
 cubic threefold 
$X$ in the list above can be recovered 
 starting from the equation of 
 the \emph{associated curve} $S$
 on $\PP^1\times\PP^1$  of type $(3, 3)$, 
 see \cite{FW89}. 
 We describe this recovering process below. 
 We also show that $X$ is defined over $\bar\QQ$ 
provided $S$ is. Let us show first that one can choose 
$S$ that is defined over $\bar\QQ$. 
 
 Let $X$ be a nodal cubic threefold in $\PP^4$.
Choosing a nodal point $P$  of the threefold $X$, 
in appropriate coordinates $(t_0,\ldots,t_4)$ on $\PP^4$, 
$X$ can be given by equation
$t_4Q+R=0$, where $Q=t_0t_3-t_1t_2$,
$R=R(t_0,\ldots,t_3)$ is a cubic form and $P=(0:0:0:0:1)$.

In $\PP^3$ with coordinates $(t_0,\ldots,t_3)$, $Q$
(resp. $R$)
defines a quadric also noted $Q$
(resp. a cubic surface also noted $R$).
The associated curve $S$ is the complete intersection curve
$Q\cdot R$. This is a $(3,3)$-curve 
on the quadric $Q\cong \PP^1\times\PP^1$.
 
 According to \cite[Lem.~2.4]{FW89}, 
 see also \cite[Thm. 8.1]{Wer87}, 
 a nodal cubic threefold $X$ in $\PP^4$ admits 
a projective small resolution if and only if 
 the associated curve 
 $S$ 
 is nodal, all its irreducible components 
 are smooth, and at least one of these 
 components is of type $(a,b)$ with $a\neq b$. 
 These conditions are satisfied exactly in the 7 
 cases listed in Theorem \ref{thm:FW}.
 
 Letting each component of $S$ run 
 over the corresponding component of 
 the Hilbert scheme of curves on $\PP^1\times\PP^1$,
 the properties above remain valid 
on a dense open subset 
 of the product of these Hilbert schemes. 
 Hence, in each of the 7 cases above
 one can choose a corresponding curve $S$ 
defined over $\bar\QQ$.
For example, if $(d,s)=(1,6)$ then $S=S_1+S_2$
with $S_1$ of type $(1,2)$ and $S_2$ of type $(2,1)$,
see  \cite[Sec. 3]{FW89}. 
One can take e.g. $S_1=\{x_0y_0^2 + x_1y_1^2 = 0\}$ and 
$S_2=\{x_0^2y_0 + x_1^2y_1 = 0\}$ 
in bihomogeneous coordinates $((x_0 : x_1), (y_0 : y_1))$ on 
$\PP^1\times\PP^1$. 

To recover $R$ starting with $S$, notice first 
that $R$  is defined by $S$ only up to
adding to $R$ the cubic form $QL$ 
with an arbitrary linear form $L$ 
 in $t_0,\ldots,t_3$.
Up to isomorphism, 
the substitution $R\mapsto R+QL$ 
actually does not affect $X$.
Indeed, replacing
$t_4Q+R$ by $(t_4+L)Q+R$ and letting $t_4'=t_4+L$
we get the same equation for $X$
in the new coordinates  $(t_0,\ldots,t_3, t_4')$.

Let $H$ be
a hyperplane in $\PP^3$. 
We have $h^0(\cO_{\PP^3}(3H))=20$. However, 
since $R$ is defined by $S$ only up to adding 
a product  $QL$, up to scaling the dimension 
of the parameter space 
of complete intersections $S=Q\cdot R$ equals 
$20-h^0(\cO_{\PP^3}(H))=16$.

Let $e_0$ and $e_1$ be transversal rulings on $Q$
whose classes generate $\Pic(Q)=\ZZ^2$. 
By the Riemann-Roch and the Kodaira 
vanishing theorems  
we have $h^0(\cO_Q(3(e_0+e_1)))=16$.

Since the above dimensions coincide,
we conclude that every effective 
$(3,3)$-divisor $S$  on $Q$ 
is a complete intersection of $Q$ with
a cubic surface $R$ in $\PP^3$. 
For any member $R'$
of the linear system 
generated by $R$ and $|Q+H|$ 
we also have $S=Q\cdot R'$.

Given a $(3,3)$-curve $S$ on $Q$ 
satisfying the conditions of  \cite[Lem.~2.4]{FW89}
and any cubic surface 
$R$ such that $S=R\cdot Q$, the equation $t_4Q+R=0$
defines a nodal cubic threefold in $\PP^4$ 
of one of the 7 types listed in Theorem \ref{thm:FW}.
The nodes of $X$ are in one to one correspondence 
with the nodes of $S$,
see e.g. \cite[Thm.~1.4(i)]{Vik23}. 

Suppose now that the associated curve $S$ on $Q$
is defined over $\bar \QQ$.
We claim that one can also choose $R$ 
defined over $\bar \QQ$.
Indeed, the embedding 
$Q=\PP^1\times\PP^1 \hookrightarrow \PP^3$
can be given in coordinates via
\[((x_0 : x_1), (y_0 : y_1))\mapsto (t_0,\ldots,t_3)=
(x_0y_0 : x_0y_1 : x_1y_0 : x_1y_1).\]
A monomial
$t_0^a t_1^b t_2^c t_3^d$ of degree $a+b+c+d=3$
yields a bivariate monomial
\[x_0^{a+b} x_1^{c+d} y_0^{a+c} y_1^{b+d} =
 x_0^A x_1^B y_0^C y_1^D \]
of bidegree $(A+B, C+D)=(3,3)$.
It is impossible to recover 
uniquely the exponents 
$(a,b,c,d)$ from the ones $(A,B,C,D)$;
in fact, the matrix of the corresponding system 
of linear equations is degenerate.
This can be done by hand, however.
For instance,  for every
bivariate monomial $x_0^A x_1^B y_0^C y_1^D$ 
with $A > B, C > D$  and $A \ge C$
we get a solution by letting $c=0, a=C, d=B$ and 
$b=D-B=A-C$; 
indeed, we have $a+b+c+d=3$.
We proceed likewise in all other cases. 
In the majority of cases we get a unique solution; 
in the remaining cases there are exactly two solutions, and their 
difference is divisible by the quadric form $Q=t_0t_3-t_1t_2$.
So, starting with a bihomogeneous equation of $S$
with coefficients in $\bar \QQ$, in this way
we get an equation of $R$  
with coefficients in $\bar \QQ$ such that $S=Q\cdot R$.
The resulting cubic threefold $X\subset\PP^4$ 
with equation $x_4Q+R=0$ also is defined over $\bar \QQ$.
\end{remark}

 We are now ready to prove our main result.
\begin{proof}[Proof of Theorem \ref{mthm}]
Let $X$ be a cubic threefold in $\PP^4$ with $s$ 
nodes and defect $d$,  
admitting an algebraic small resolution, see
Theorem \ref{thm:FW}.
Then all small  resolutions of $X$ are algebraic 
(see Theorem \ref {thm:proj-res}) and they are 
uniformly rational by Theorem  \ref {thm:unif-rat}.  
Let $x\in X$ be a general point and let $Y$ be the
blow--up of $X$ at $x$. By Theorem \ref {thm:blow}, 
all small resolutions of $Y$ are also algebraic and 
uniformly rational. 

As in  Proposition \ref {prop:Kre-Had}, 
there is a double cover $\phi: Z\longrightarrow \PP^3$
branched along a quartic surface $B$ with $6+s$ nodes, 
six of which lying on a trope, with
a morphism $\varphi\colon Y \to Z$ such that the morphism 
$\tilde\pi\colon Y \to \PP^3$ determined 
by the projection 
$\pi$ from $x$ factors through $\varphi$ and $\phi$. 
Moreover 
$\varphi\colon Y \to Z$ is a partial small resolution 
of $Z$ at the six nodes 
of $Z$ lying on the trope. 
Then all small resolutions of $Y$ determine 
small resolutions of $Z$ that are algebraic 
and uniformly rational, and therefore Gromov elliptic. 
Now the assertion follows.
\end{proof}
\noindent {\bf Acknowledgments.} 
The second author thanks Yuri Prokhorov and Ivan Cheltsov 
for clarifying discussions. 


\begin{thebibliography}{AKZ24} 
%
\bibitem[AKZ24]{AKZ24}
I.~Arzhantsev,  S.~Kaliman, and
M.~Zaidenberg,
\emph{Varieties covered by affine spaces, 
uniformly rational varieties and their cones},
Adv. Math. {\bf 437} (2024), Paper No. 109449, 18 pp.
%
 \bibitem[BB14]{BB14}
F.~Bogomolov and C.~B\"{o}hning,
\emph{On uniformly rational varieties},
in: \emph{Topology, geometry, integrable systems, 
and mathematical physics}, 33--48. 
Amer. Math. Soc. Transl. Ser. 2, {\bf 234}. 
Amer. Math. Soc. 2014.
%
\bibitem[CPS19]{CPS19} 
I.~Cheltsov, V.~Przyjalkowski, and C.~Shramov.
\emph{Which quartic double solids are rational?},
J. Algebraic Geom. {\bf 28} (2019),  no. 2, 201--243. 
%
\bibitem[CC01]{CC01}
L.~Chiantini and C.~Ciliberto, 
\emph{Weakly defective varieties}, 
Trans. Amer. Math. Soc. {\bf 354} (2001), 151--178.
%
\bibitem[FW89]{FW89}
H.~Finkelnberg and J.~Werner,
\emph{Small resolutions of nodal cubic threefolds},
Nederl. Akad. Wetensch. Indag. Math. {\bf 51}
(1989), 185--198.
%
\bibitem[For17]{For17} 
F.~Forstneri\v{c}, 
\emph{Stein Manifolds and Holomorphic Mapping.
The Homotopy Principle in Complex Analysis}, 2nd ed. 
Ergeb. Math. Grenzgeb. (3) {\bf 56}. 
Springer, Cham,  2017.
%
\bibitem[For17a]{For17a} 
F.~Forstneri\v{c}, 
\emph{Surjective holomorphic maps onto Oka manifolds}, 
in: \emph{Complex and symplectic geometry},  73--84.
Springer INdAM Ser. vol.~{\bf 21}, Springer, Cham, 2017.
%
\bibitem[For23]{For23} 
F.~Forstneri\v{c}, 
\emph{Recent developments on Oka manifolds},
Indag. Math., New Ser. {\bf 34} (2023), no. 2, 367--417.
%
\bibitem[Gro89]{Gro89}
M.~Gromov, 
\emph{Oka's principle for holomorphic sections 
of elliptic bundles}, 
J. Amer. Math. Soc.~{\bf 2} (1989), no. 4, 851--897. 
%
\bibitem[Had00]{Had00}
I.~Hadan, 
\emph{Tangent conics at quartic surfaces 
and conics in quartic double solids}, 
Math. Nachr. {\bf 210} (2000), 127--162.
%
\bibitem[Hen11]{Hen11} 
A.~Henderson, 
\emph{The twenty-seven lines upon the cubic surface}, 
Cambridge University Press, 1911.
%
\bibitem[Jes16]{Jes16}
C.M.~Jessop, 
\emph{Quartic surfaces with singular points}, 
Cambridge University Press, 1916.
%
\bibitem[KZ24]{KZ24}
S.~Kaliman and M.~Zaidenberg,
\emph{Algebraic Gromov's ellipticity of cubic hypersurfaces},
arXiv:2402.04462 (2024).
%
\bibitem[Kal86]{Kal86}
A.S.C.M.~Kalker, 
\emph{Cubic fourfolds with fifteen ordinary double points}, 
Thesis, University of Leiden, Leiden, 1986.
%
\bibitem[Kre00]{Kre00}
B.~Kreussler,
\emph{Another description of certain quartic double solids},
Math. Nachr. {\bf 212} (2000), 91--100.
%
\bibitem[Kus22]{Kus22}
Y.~Kusakabe,
\emph{Surjective morphisms 
onto subelliptic varieties},
arXiv:2212.06412 (2022). 
%
\bibitem[Seg1886]{Seg1886}
C.~Segre, 
\emph{Sulla variet\`a cubica con dieci 
punti doppi dello spazio a quattro dimensioni}, 
Atti della R. Acc. delle Scienze di Torino {\bf 22} 
(1886--87), 791--801.
%
\bibitem[Vik23]{Vik23}
S.~Viktorova,
\emph{On the classification of singular cubic threefolds},
arXiv:2304.10452v2, 2023. 
%
\bibitem[Wer87]{Wer87}
J.~Werner, 
\emph{Kleine Aufl\"{o}sungen 
spezieller dreidimensionaler Variet\"{a}ten}.
Thesis, Bonner Mathematische Schriften {\bf 186}, 
Universit\"{a}t Bonn, Mathematisches Institut, 
Bonn, 1987, viii+119 pp.
%
\bibitem[Zai24]{Zai24} M.~Zaidenberg,
\emph{Algebraic Gromov ellipticity: a brief survey}.
Taiwanese J. Math.  (to appear), DOI: 10.11650/tjm/241001.
%
\end{thebibliography}
\end{document}